\newtheorem{thm}{Theorem}[section]
\newtheorem{lem}[thm]{Lemma}
\newtheorem{cor}[thm]{Corollary}
\theoremstyle{remark}
\newtheorem{remark}[thm]{Remark}
\makeatletter \@addtoreset{equation}{section} \makeatother
\makeindex \setcounter{tocdepth}{2}
\def\qed{\hfill \rule{4pt}{7pt}}
\def\p{{\overline{p}}}
\def\N{{\overline{N}}}
\numberwithin{equation}{section}
\begin{document}

\begin{center}
{\Large\bf Asymptotic formula for the $M_2$-ranks of overpartitions}\\ [7pt]
\end{center}

\vskip 3mm

\begin{center}
Helen W.J. Zhang$^{1,2}$ and Ying Zhong$^{1}$\\[8pt]
$^{1}$School of Mathematics\\
Hunan University\\
Changsha 410082, P. R. China\\[12pt]

$^{2}$Hunan Provincial Key Laboratory of \\
Intelligent Information Processing and Applied Mathematics\\
Changsha 410082, P. R. China\\[15pt]

Emails:  helenzhang@hnu.edu.cn,  YingZhong@hnu.edu.cn
\\[15pt]

\end{center}

\vskip 3mm

\begin{abstract}

Let $\N_2(a,c,n)$ be the number of overpartitions of $n$ whose the $M_2$-rank is congruent to $a$ modulo $c$.
In this paper, we obtain the asymptotic formula of $\N_2(a,c,n)$ utilizing the Ingham Tauberian Theorem.
As applications, we derive inequalities concerning with $\N_2(a,c,n)$ including its strict concavity and log-concavity.

\vskip 6pt

\noindent
{\bf Mathematics Subject Classification:} 05A17, 11P72, 11P82
\\ [7pt]
{\bf Keywords:} Overpartition, the $M_2$-rank, asymptotic formula, the Ingham Tauberian Theorem
\end{abstract}

\section{Introduction}
The objective of this paper is to give an asymptotic formula for the $M_2$-rank of overpartition.
Based on this formula,
we derived several inequalities on the $M_2$-rank of overpartition such as concavity and log-concavity.

Hardy and Ramanujan \cite{Hardy-Ramanujan-1918-1} initially introduced the celebrated Circle Method (now known as  the Hardy-Ramanujan circle method) to
obtain the asymptotic formula for integer partitions originated from finding an exact formula for it.
Recall that a partition of a positive integer $n$ is a sequence of non-increasing positive integers whose sum equals $n$ and $p(n)$ is defined as the number of partitions of $n$.
Hardy and Ramanujan \cite{Hardy-Ramanujan-1918-1} proved that
\begin{align*}
p(n)\sim \frac 1{4n\sqrt{3}}e^{\pi \sqrt{\frac {2n}3}},~\text{for}~n\rightarrow\infty.
\end{align*}
Asymptotic formulas have been proven as powerful tools to gain inequalities of the partition function.
For example,
DeSalvo and Pak \cite{DeSalvo-Pak-2015} established strong log-concavity of the $p(n)$, that is, for all $n>m>1$,
\[p(n)^2>p(n-m)p(n+m)\]
from the asymptotic formula above.
See \cite{Bessenrodt-Ono-2016,Chen-Jia-Wang-2019} for further applications.

Other statistics involving partitions have been introduced since.
To interpret Ramanujan's congruences for $p(n)$ combinatorially,
Dyson \cite{Dyson-1944} introduced the rank of a partition.
This is defined as the largest part of the partition minus the number of parts.
Bringmann obtained the asymptotic formula of $N(a,c,n)$ in \cite{Bringmann-2009} by the Hardy-Ramanujan circle method. Moreover, she confirmed the conjecture which was given by Andrews and Lewis \cite{Andrews-Lewis-2000},
\begin{align*}
N(0,3,n)<N(1,3,n) &~\text { if } n \equiv 0 \text { or } 2 \quad(\bmod 3), \\
N(0,3,n)>N(1,3,n) &~\text { if } n \equiv 1 \quad(\bmod 3).
\end{align*}
With the help of the Ingham Tauberian Theorem, Males \cite{Males-2021} stated another asymptotic formula of $N(a,c,n)$, that is, for fixed $0 \leq r<t$ and $t \geq 2$,
\[N(r,t,n) \sim \frac{1}{4 t n \sqrt{3}} e^{2 \pi \sqrt{\frac{n}{6}}},~\text{as}~n\rightarrow\infty.\]
The asymptotic formula yields that when $0\leq r<t$ and $t\geq 2$,
\begin{equation*}
  N(r,t,a)N(r,t,b)>N(r,t,a+b)
\end{equation*}
for sufficiently large $a$ and $b$.
Several other asymptotic formulas and inequalities for rank of a partition have also been obtained, for example \cite{Bringmann-Kane-2010,Hou-Jagadeesan-2018,Dousse-Mertens-2015,Gomez-Zhu-2021}.

Bringmann and Males illustrated two different approaches to studying the asymptotic behavior of the coefficients of a series:
One can either use the Hardy-Ramanujan circle method \cite{Bringmann-Mahlburg,Hardy-Ramanujan-1918-1}
or apply the Ingham Tauberian Theorem \cite{Ingham-1941,Jang-2017}.
In this paper,
we study an asymptotic formula for the $M_2$-rank of overpartition
in the spirit of The Ingham Tauberian Theorem.


The $M_2$-rank for overpartitions was introduced by Lovejoy \cite{Lovejoy-2008}.
Recall that an overpartition \cite{Corteel-Lovejoy-2004} of a nonnegative integer $n$ is a partition of $n$ where the first occurrence of each distinct part may be overlined. Denote $\p(n)$ by the number of overpartitions of $n$.
Let $l(\lambda)$ be the largest part of $\lambda$, $n(\lambda)$ be the number of parts of $\lambda$ and $n(\lambda_o)$  be the number of odd non-overlined parts of $\lambda$.
Then the $M_2$-rank of an overpartition $\lambda$ is defined as
\[M_2\text{-}\mathrm{rank}(\lambda)=\left\lceil\frac{l(\lambda)}{2}\right\rceil-n(\lambda)-n(\lambda_o)-\chi(\lambda),\]
where $\chi(\lambda)=1$ if $l(\lambda)$ is odd and non-overlined and $\chi(\lambda)=0$ otherwise.
Denote $\N_2(a,c,n)$  by the number of overpartitions of $n$ whose the $M_2$-rank is congruent to $a$ modulo $c$.
By investigating the asymptotic behavior of the Appell function associated with the generating function of $\N_2(a,c,n)$,
we confirm that the $M_2$-rank of overpartition fulfills the conditions in the Ingham Tauberian Theorem.
Hence we obtain the following asymptotic formula for the $M_2$-rank of overpartition.

\begin{thm}\label{N-asym}
If the odd integer $c\geq 2$, then for any $0\leq a<c$ we have
\begin{equation}\label{N-asym-eq}
  \N_2(a,c,n)\sim \frac 1c \p(n)\sim \frac 1{8cn}e^{\pi \sqrt{n}}
\end{equation}
as $n\rightarrow\infty$.
\end{thm}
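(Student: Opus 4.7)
The plan is to apply the Ingham Tauberian Theorem after a roots-of-unity dissection of the two-variable generating function for the $M_2$-rank. Setting
$$\mathcal{O}_2(z;q):=\sum_\lambda z^{M_2\text{-}\mathrm{rank}(\lambda)}q^{|\lambda|}$$
and using orthogonality of characters on $\mathbb{Z}/c\mathbb{Z}$, one has
$$\sum_{n\geq 0}\N_2(a,c,n)\,q^n=\frac{1}{c}\sum_{j=0}^{c-1}\zeta_c^{-aj}\,\mathcal{O}_2(\zeta_c^j;q),\qquad \zeta_c:=e^{2\pi i/c}.$$
The $j=0$ contribution is simply $\mathcal{O}_2(1;q)=(-q;q)_\infty/(q;q)_\infty=\sum_n\p(n)q^n$, and the modular transformation of the Dedekind $\eta$-function yields the familiar asymptotic
$$\sum_n\p(n)e^{-nt}\sim \frac{\sqrt{t}}{2\sqrt{\pi}}\,e^{\pi^{2}/(4t)}\qquad(t\to 0^+).$$

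The decisive step, and the main obstacle, is showing that for every $j\in\{1,\dots,c-1\}$ the twisted series grows strictly slower, $\mathcal{O}_2(\zeta_c^j;e^{-t})=o\!\bigl(\sqrt{t}\,e^{\pi^{2}/(4t)}\bigr)$ as $t\to 0^+$. My approach would start from Lovejoy's Appell--Lerch representation of $\mathcal{O}_2(z;q)$, peel off the pure theta and eta factors (which transform cleanly under the modular group), and control the remaining Appell sum evaluated at $z=\zeta_c^j$ by means of its Zwegers-type completion to a real-analytic modular object, from which uniform bounds on the half-plane $\mathrm{Re}(t)>0$ can be extracted. The hypothesis that $c$ is odd enters here in an essential way: it ensures $\zeta_c^j\neq -1$ for all admissible $j$, and thereby rules out the specialisation $z=-1$, whose Appell sum would otherwise produce an exponential of the same order as the $z=1$ case and destroy the required subdominance.

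Granted subdominance of the $j\neq 0$ terms, we conclude
$$\sum_n\N_2(a,c,n)\,e^{-nt}\sim \frac{\sqrt{t}}{2c\sqrt{\pi}}\,e^{\pi^{2}/(4t)}.$$
Coupled with the nonnegativity and weak monotonicity of $\N_2(a,c,n)$ in $n$ (which follows from a routine injection on overpartitions), the Ingham Tauberian Theorem applied with parameters $\lambda=(2c\sqrt{\pi})^{-1}$, $\alpha=1/2$, and $A=\pi^{2}/4$ produces
$$\N_2(a,c,n)\sim \frac{1}{8cn}\,e^{\pi\sqrt{n}}\sim \frac{1}{c}\,\p(n),$$
which is the asserted equivalence \eqref{N-asym-eq}. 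Everything outside the second paragraph---the character-sum identity, the modular asymptotic for $\p(n)$, the combinatorial monotonicity check, and the final invocation of Ingham's theorem---is essentially bookkeeping; the substantive content of the proof lies in the uniform bound on the Appell--Lerch sum at non-trivial odd-order roots of unity.
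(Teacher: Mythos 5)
Your overall strategy coincides with the paper's: the orthogonality dissection, the identification of the $j=0$ term with $\sum_n\p(n)q^n$, the reduction of the non-trivial terms to Appell--Lerch sums evaluated at $z=j/c$ with $0<j/c<1/2$ (which is exactly where the oddness of $c$ enters, as you correctly observe), and the final application of Ingham's theorem with $\lambda=(2c\sqrt{\pi})^{-1}$, $\alpha=1/2$, $A=\pi^2/4$. The genuine gap sits precisely at the step you yourself flag as decisive: you never prove that $\mathcal{O}_2(\zeta_c^j;e^{-t})=o\bigl(\sqrt{t}\,e^{\pi^2/(4t)}\bigr)$; you only describe a plan (``peel off the theta factors, use the Zwegers-type completion, extract uniform bounds''). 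That estimate is the entire content of the paper's Section \ref{s-A}: one writes $\overline{R}_2(\zeta,q)$ in terms of $A_1(\pm z,\tau;2\tau)=\vartheta(\tau;2\tau)\mu(\pm z,\tau;2\tau)$, applies the modular transformations \eqref{theta} and \eqref{mu} (the latter producing a Mordell integral), verifies term by term that every power of $q_0=e^{-2\pi^2/\varepsilon}$ occurring in the transformed $\mu$-part is strictly positive when $0<z<\frac12$, and bounds the Mordell integral by shifting the contour and using $\left|\cosh\left(\pi(w\mp iz)\right)\right|^{-1}\leq\left|\sec(\pi z)\right|$. None of this follows automatically from the existence of the completion $\wmu$: the completion controls modularity, not the signs of the exponents in the asymptotic expansion, so ``uniform bounds can be extracted'' is an assertion where the actual work of the theorem lives.

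A second, smaller gap: the weak monotonicity needed to satisfy the ``weakly increasing coefficients'' hypothesis of Ingham's theorem is not ``a routine injection on overpartitions.'' The underlying inequality $\N_2(m,n)\geq\N_2(m,n-1)$ was conjectured by Chan and Mao \cite{Chan-Mao-2014} and proved by Xiong and Zang \cite{Xiong-Zang-2020}; the paper imports it as Lemma \ref{N-m} and then sums over the residue class $m\equiv a\pmod{c}$. You should either cite that theorem or supply the (non-obvious) combinatorial argument; as written, this hypothesis of Theorem \ref{Ingham} is unverified.
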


We deduce strict concavity of $\N_2(a,c,n)$ by applying Theorem \ref{N-asym}.

\begin{thm}\label{con}
If the odd integer $c\geq 2$, then for any $0\leq a<c$ we have
\begin{equation*}
  \N_2(a,c,n_1)\N_2(a,c,n_2)>\N_2(a,c,n_1+n_2)
\end{equation*}
for sufficiently large $n_1$ and $n_2$.
\end{thm}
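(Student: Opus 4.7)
The plan is to derive the strict concavity directly from the asymptotic formula in Theorem~\ref{N-asym}. Substituting $\N_2(a,c,n)\sim\tfrac{1}{8cn}e^{\pi\sqrt n}$ into the ratio of the two sides of the desired inequality yields
\begin{equation*}
\frac{\N_2(a,c,n_1)\,\N_2(a,c,n_2)}{\N_2(a,c,n_1+n_2)}\sim\frac{n_1+n_2}{8c\,n_1 n_2}\,e^{\pi(\sqrt{n_1}+\sqrt{n_2}-\sqrt{n_1+n_2})}
\end{equation*}
as $\min(n_1,n_2)\to\infty$. It therefore suffices to show that the main factor on the right hand side tends to infinity.

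Next, I would bound the exponent from below by means of the identity
\begin{equation*}
\sqrt{n_1}+\sqrt{n_2}-\sqrt{n_1+n_2}=\frac{2\sqrt{n_1 n_2}}{\sqrt{n_1}+\sqrt{n_2}+\sqrt{n_1+n_2}}.
\end{equation*}
Assuming without loss of generality that $n_1\le n_2$, the denominator is at most $3\sqrt{2n_2}$, so the exponent is at least $\sqrt{2n_1}/3$. Combined with the elementary bound $\frac{n_1+n_2}{8c\,n_1 n_2}\ge\frac{1}{8cn_1}$, the main factor is bounded below by $\frac{1}{8cn_1}\,e^{\pi\sqrt{2n_1}/3}$, which diverges as $n_1\to\infty$. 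Consequently, once $\min(n_1,n_2)$ is large enough the ratio exceeds $1$, which is the required inequality.

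The only delicate point I anticipate is \emph{uniformity} in the asymmetric regime where $n_2/n_1\to\infty$ while $n_1$ stays moderate. In that regime the polynomial prefactor decays like $1/(8cn_1)$, so one needs to know that the exponential still grows in terms of $\min(n_1,n_2)$ rather than $n_1+n_2$. The lower bound $\sqrt{2n_1}/3$ above achieves exactly this: the exponential dominance depends only on $\min(n_1,n_2)$, and it overwhelms the $1/n_1$ prefactor decay. Beyond this elementary estimate, no additional input beyond Theorem~\ref{N-asym} is needed.
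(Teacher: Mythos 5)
Your proposal is correct and follows the same route as the paper: form the ratio $\N_2(a,c,n_1)\N_2(a,c,n_2)/\N_2(a,c,n_1+n_2)$, substitute the asymptotic from Theorem \ref{N-asym}, and observe that the ratio exceeds $1$ for large $n_1,n_2$. The only difference is that you explicitly justify the divergence via the identity $\sqrt{n_1}+\sqrt{n_2}-\sqrt{n_1+n_2}=2\sqrt{n_1n_2}/(\sqrt{n_1}+\sqrt{n_2}+\sqrt{n_1+n_2})$ and the resulting lower bound in terms of $\min(n_1,n_2)$, a uniformity point the paper's proof leaves implicit.
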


The following inequality also serves as an application of Theorem \ref{N-asym}.

\begin{thm}\label{ineq}
Let $c\geq 2$ be an odd integer and $0\leq a<c$.
Then for sufficiently large $n_1$ and $n_2$ with $n_1<n_2+1$,
we have
\begin{align*}
  \N_2(a,c,n_1)\N_2(a,c,n_2)>\N_2(a,c,n_1-1)\N_2(a,c,n_2+1).
\end{align*}
\end{thm}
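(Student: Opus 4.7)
Since $n_1$ and $n_2$ are integers, the hypothesis $n_1<n_2+1$ is simply $n_1\leq n_2$, i.e.\ $n_1-1<n_2$. Dividing the claimed inequality by the positive quantity $\N_2(a,c,n_1-1)\N_2(a,c,n_2)$ transforms it into the equivalent form
\[
  \frac{\N_2(a,c,n_1)}{\N_2(a,c,n_1-1)} \;>\; \frac{\N_2(a,c,n_2+1)}{\N_2(a,c,n_2)}.
\]
Setting $R(n):=\N_2(a,c,n+1)/\N_2(a,c,n)$, it therefore suffices to prove that $R(n)$ is strictly decreasing for all $n$ sufficiently large, since then $R(n_1-1)>R(n_2)$ whenever $n_1-1<n_2$ and both are large. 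This reduction is the first step of the plan.

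Next, I would analyse $R(n)$ via Theorem~\ref{N-asym}. Writing $\N_2(a,c,n)=f(n)(1+\varepsilon_n)$ with $f(n):=\frac{1}{8cn}e^{\pi\sqrt n}$ and $\varepsilon_n\to 0$, one has
\[
  R(n) \;=\; \tilde R(n)\cdot\frac{1+\varepsilon_{n+1}}{1+\varepsilon_n},\qquad \tilde R(n)\;:=\;\frac{f(n+1)}{f(n)}\;=\;\frac{n}{n+1}\exp\!\bigl(\pi(\sqrt{n+1}-\sqrt n)\bigr).
\]
Logarithmic differentiation of the real-variable extension $\phi(x):=\frac{x}{x+1}\exp(\pi(\sqrt{x+1}-\sqrt x))$ gives
\[
  \frac{d}{dx}\log\phi(x) \;=\; \frac{1}{x(x+1)} - \frac{\pi}{2(\sqrt{x+1}+\sqrt x)\sqrt{x(x+1)}} \;=\; -\frac{\pi}{4}\,x^{-3/2} + O(x^{-2}),
\]
which is strictly negative for $x$ large. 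Consequently $\tilde R(n)$ is strictly decreasing for $n$ large, with a positive gap $\tilde R(n_1-1)-\tilde R(n_2)$ of order at least $(\min(n_1,n_2))^{-3/2}$ whenever $n_1\leq n_2$.

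The remaining task, which I expect to be the main obstacle, is to transfer strict monotonicity from the asymptotic ratio $\tilde R$ to the true ratio $R$. Writing $R(n)=\tilde R(n)(1+\eta_n)$ with $\eta_n=(\varepsilon_{n+1}-\varepsilon_n)/(1+\varepsilon_n)\to 0$, the target inequality $R(n_1-1)>R(n_2)$ is equivalent to
\[
  \tilde R(n_1-1)-\tilde R(n_2) \;>\; \tilde R(n_2)\,\eta_{n_2}-\tilde R(n_1-1)\,\eta_{n_1-1}.
\]
Because the left-hand side is only of size $n^{-3/2}$, a bound on $\eta_n$ sharper than the bare $o(1)$ delivered directly by the Ingham Tauberian theorem is needed. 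I would obtain such a bound by combining the companion asymptotic $\N_2(a,c,n)\sim\tfrac1c\p(n)$ of Theorem~\ref{N-asym} with the classical Rademacher-type sharp asymptotic expansion for $\p(n)$, which provides enough quantitative control on $\varepsilon_n$ to force $\eta_n=o(n^{-3/2})$. Once this refinement is in place, the displayed inequality holds for all sufficiently large $n_1\leq n_2$, and Theorem~\ref{ineq} follows.
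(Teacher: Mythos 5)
Your reduction to the strict decrease of $R(n)=\N_2(a,c,n+1)/\N_2(a,c,n)$ and your computation of the main-term ratio $\tilde R(n)=\frac{n}{n+1}e^{\pi(\sqrt{n+1}-\sqrt n)}$ are correct, and you have put your finger on the genuine difficulty: in the regime $n_1\asymp n_2$ (in particular $n_1=n_2$, which is exactly the case needed for the log-concavity corollary) the gap $\tilde R(n_1-1)-\tilde R(n_2)$ is only of order $n^{-3/2}$, so the bare $o(1)$ relative error supplied by Theorem \ref{N-asym} does not suffice to transfer the inequality from $\tilde R$ to $R$. For comparison, the paper's own proof is a direct two-line computation: it forms the ratio $\N_2(a,c,n_1)\N_2(a,c,n_2)/\bigl(\N_2(a,c,n_1-1)\N_2(a,c,n_2+1)\bigr)$, replaces each factor by its main term from Theorem \ref{N-asym}, notes that the resulting expression exceeds $1$, and stops; it does not engage with the issue you raise (and since the main-term ratio tends to $1$, the implication ``asymptotic to something $>1$, hence $>1$'' is not valid there). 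Up to your final step, your argument is a more careful version of the same approach.

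The gap is in that final step: the repair you propose cannot work as described. Writing $\N_2(a,c,n)=\frac1c\p(n)(1+\delta_n)$, the relation $\N_2(a,c,n)\sim\frac1c\p(n)$ is itself only known (via the Ingham Tauberian theorem) with $\delta_n=o(1)$, and no amount of precision in a Rademacher-type expansion of $\p(n)$ improves the unknown factor $1+\delta_n$. Consequently you still only have $\varepsilon_n=o(1)$, hence $\eta_n=o(1)$, and the required bound $\bigl|\tilde R(n_2)\eta_{n_2}-\tilde R(n_1-1)\eta_{n_1-1}\bigr|<\tilde R(n_1-1)-\tilde R(n_2)$, whose right-hand side is of size $n^{-3/2}$, is not established. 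Closing this would require asymptotics for $\N_2(a,c,n)$ itself with a polynomial error term (circle-method bounds in the style of Bringmann's rank asymptotics), which neither your proposal nor the paper provides; as it stands, both arguments prove the statement only in regimes where $n_2-n_1\to\infty$ fast enough that the main-term gap does not vanish.
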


Setting $n_1=n_2$ in Theorem \ref{ineq},
we show that $\N_2(a,c,n)$ is asymptotically log-concave.

\begin{cor}
If the odd integer $c\geq 2$, then for any $0\leq a<c$, we have
\begin{equation*}
  \N_2(a,c,n)^2>\N_2(a,c,n-1)\N_2(a,c,n+1)
\end{equation*}
for sufficiently large $n$.
\end{cor}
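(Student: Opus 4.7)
The plan is to derive this corollary as an immediate specialization of Theorem \ref{ineq}, in which all of the analytic work has already been done. The strategy is simply to set $n_1 = n_2 = n$ in the conclusion of Theorem \ref{ineq}, and then verify that the hypothesis remains satisfied in this degenerate case.

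First I would check the hypothesis. Theorem \ref{ineq} requires the pair $(n_1, n_2)$ to satisfy $n_1 < n_2 + 1$, together with both being sufficiently large. With the choice $n_1 = n_2 = n$, the strict inequality becomes $n < n + 1$, which holds for every integer $n$. The largeness condition transfers verbatim into a single largeness condition on $n$.

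Next I would substitute $n_1 = n_2 = n$ into the conclusion
\[
\N_2(a,c,n_1)\N_2(a,c,n_2) > \N_2(a,c,n_1 - 1)\N_2(a,c,n_2 + 1)
\]
provided by Theorem \ref{ineq}. On the left this produces $\N_2(a,c,n)^2$, while on the right it yields $\N_2(a,c,n-1)\N_2(a,c,n+1)$, which is precisely the asserted log-concavity.

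There is no genuine obstacle at this stage, since all of the substantive content sits inside Theorem \ref{N-asym} and Theorem \ref{ineq}; once those have been established, the corollary is a one-line consequence. The only issue to keep in mind is the minor bookkeeping point that the ``$n_1 < n_2 + 1$'' hypothesis in Theorem \ref{ineq} was written to allow equality of $n_1$ and $n_2$, which is exactly what the corollary exploits.
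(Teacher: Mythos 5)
Your proposal is correct and matches the paper exactly: the paper derives this corollary by setting $n_1=n_2$ in Theorem \ref{ineq}, just as you do, and your observation that the hypothesis $n_1<n_2+1$ was phrased precisely to permit $n_1=n_2$ is the only point that needs checking. Nothing further is required.
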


The paper is organized as follows.
In Section \ref{s-p}, we introduce the Ingham Tauberian Theorem \cite{Bringmann-Jennings-Schaffer-Mahlburg-2021,Ingham-1941} and the monotonicity of rank of overpartition obtained by Xiong and Zang \cite{Xiong-Zang-2020}.
In Section \ref{s-A}, we obtain the asymptotic behavior of the Appell function associated with the generating function of $\N_2(a,c,n)$.
In Section \ref{s-asym}, we prove our main result Theorem \ref{N-asym} and provide its applications in deriving inequalities (Theorems \ref{con} and \ref{ineq}).

\section{Preliminary}\label{s-p}
In this section, we recall some facts about the Appell function without proof.
Let the Appell function \cite{Zwegers-2002}
\begin{equation}\label{mu-def}
  \mu(u,z;\tau):=\frac{e^{\pi iu}}{\vartheta(z;\tau)}\sum_{n\in\mathbb{Z}}\frac{(-1)^n e^{\pi i(n^2+n)\tau}e^{2\pi inz}}{1-e^{2\pi in\tau}e^{2\pi iu}},
\end{equation}
where Jacobi theta function
\begin{equation}\label{theta-def}
  \vartheta(z;\tau):=\sum_{n\in\frac 12+\mathbb{Z}}e^{\pi in^2\tau+2\pi in\left(z+\frac 12\right)},
\end{equation}
with $z\in\mathbb{C}$. And $\vartheta$ satisfies
\begin{equation}\label{theta}
  \vartheta(z;\tau)=\frac {i}{\sqrt{-i\tau}}e^{\frac {-\pi iz^2}{\tau}}\vartheta\left(\frac z{\tau};-\frac 1{\tau}\right).
\end{equation}
What's more, Zwegers showed that
\begin{equation}\label{mu}
  \mu(u,v;\tau)=\frac {-1}{\sqrt{-i\tau}}e^{\frac {\pi i(u-v)^2}{\tau}}\mu\left(\frac u{\tau},\frac v{\tau};-\frac 1{\tau}\right)+\frac 1{2i}h(u-v;\tau),
\end{equation}
where $h$ is the Mordell integral
\begin{equation}\label{h-def}
  h(z;\tau):=\int_{\mathbb{R}}\frac {e^{\pi i\tau x^2-2\pi zx}}{\cosh(\pi x)}dx,
\end{equation}
which satisfied the following transformation
\begin{equation}\label{h}
  h(z;\tau)=\frac {1}{\sqrt{-i\tau}}e^{\frac {\pi iz^2}{\tau}}h\left(\frac z{\tau};-\frac 1{\tau}\right).
\end{equation}
Recently, Zwegers \cite{Zwegers-2019} introduced the Appell function of higher level 
\begin{equation}\label{A-def}
  A_\ell(u,v;\tau):=e^{\pi i\ell u}\sum_{n\in\mathbb{Z}}\frac{(-1)^{\ell n}q^{\frac {\ell n(n+1)}{2}}e^{2\pi inv}}{1-e^{2\pi iu}q^n},
\end{equation}
which satisfies the following transformation formulae
\begin{align}\label{A}
 A_\ell(u,v;\tau)=\sum_{k=0}^{\ell-1} e^{2\pi iuk}\vartheta\left(v+k\tau+\frac {\ell-1}2;\ell\tau\right)\mu\left(\ell u,v+k\tau+\frac{\ell-1}2;\ell\tau\right).
\end{align}

In this paper, we shall use the following theorem
\cite[Theorem 1.1]{Bringmann-Jennings-Schaffer-Mahlburg-2021} due to Ingham \cite{Ingham-1941} to obtain the asymptotic formula of $\N_2(a,c,n)$.
\begin{thm}\label{Ingham}
Let $f(q):=\sum_{n\geq0}a(n)q^n$ be a power series with weakly increasing nonnegative coefficients and radius of convergence equal to one. If there exist constants
$A>0, \lambda, \alpha \in \mathbb{R}$ such that
\begin{equation*}
  f(e^{-t})\sim\lambda t^{\alpha}e^{\frac{A}{t}} ~~~~\text{as}~ t\rightarrow0^+
\end{equation*}
and
\begin{equation}\label{b-B}
 f(e^{-\varepsilon})\ll\left|\varepsilon\right|^{\alpha}e^{\frac{A}{\left|\varepsilon\right|}} ~~~~\text{as} ~\varepsilon\rightarrow0,
\end{equation}
with $\varepsilon=x+iy~ (x, ~y\in\mathbb{R}, x>0)$ in each region of the form $|y|\leq \Delta x$ for $\Delta>0$,\\
then
\begin{equation*}
  a(n)\sim \frac{\lambda}{2\sqrt{\pi}} \frac{A^{\frac{\alpha}2+\frac 14}}{n^{\frac{\alpha}2+\frac 34}}
\end{equation*}
as $n\rightarrow\infty$.
\end{thm}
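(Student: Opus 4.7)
The plan is to apply Cauchy's integral formula in tandem with a saddle-point analysis. For any $0 < t < 1$, parametrizing the contour $|q| = e^{-t}$ gives
\[
a(n) = \frac{1}{2\pi} \int_{-\pi}^{\pi} f(e^{-t - i\theta})\, e^{n(t + i\theta)}\, d\theta.
\]
The main-asymptotic hypothesis shows that at $\theta = 0$ the modulus of the integrand is approximately $\lambda t^\alpha \exp(A/t + nt)$, and minimizing the exponent $A/t + nt$ over $t > 0$ yields the saddle point $t_0 = \sqrt{A/n}$, at which this exponent equals $2\sqrt{An}$. This produces the dominant exponential factor $e^{2\sqrt{An}}$ that must multiply the algebraic prefactor in any meaningful conclusion of the theorem.

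Next I would partition the contour at $|\theta| = \Delta t_0$ into a major arc $|\theta| \leq \Delta t_0$ and a minor arc $\Delta t_0 < |\theta| \leq \pi$, for a suitably small fixed $\Delta > 0$. On the major arc, the uniform bound \eqref{b-B} together with the main asymptotic permit the substitution $f(e^{-t_0 - i\theta}) \sim \lambda(t_0 + i\theta)^\alpha \exp(A/(t_0 + i\theta))$ directly into the integrand. Expanding
\[
\frac{A}{t_0 + i\theta} = \frac{A}{t_0} - \frac{iA\theta}{t_0^2} - \frac{A\theta^2}{t_0^3} + O(\theta^3/t_0^4),
\]
one obtains a Gaussian in $\theta$; the standard evaluation $\int_{-\infty}^{\infty} e^{-A\theta^2/t_0^3}\, d\theta = \sqrt{\pi t_0^3/A}$, combined with the prefactor $\lambda t_0^\alpha e^{2\sqrt{An}}$ and the substitution $t_0 = \sqrt{A/n}$, yields the stated algebraic factor $\frac{\lambda}{2\sqrt{\pi}}\, A^{\alpha/2 + 1/4}\, n^{-\alpha/2 - 3/4}$.

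The main obstacle is controlling the minor-arc contribution, since \eqref{b-B} is valid only in a cone $|y| \leq \Delta x$. Here the Tauberian condition, namely weak monotonicity of $a(n)$, is decisive. Abel summation rewrites $(1 - q) f(q) = \sum_{n} (a(n) - a(n - 1)) q^n$ with nonnegative coefficients, so that
\[
|f(e^{-t - i\theta})| \leq \frac{1 - e^{-t}}{|1 - e^{-t - i\theta}|}\, f(e^{-t}) \ll \frac{t}{|\theta|}\, f(e^{-t}) \qquad (|\theta| \geq \Delta t).
\]
Refining this estimate, for instance by iterating it together with control on $\log|f|$ or by invoking a sharper Tauberian lemma in the spirit of Ingham's original argument, yields exponential suppression of the minor-arc integrand relative to the saddle-point contribution. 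Assembling both arcs then produces the claimed asymptotic. The minor-arc step is the only nontrivial one; the rest is a routine saddle-point computation.
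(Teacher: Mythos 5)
This statement is not proved in the paper at all: it is quoted verbatim as Theorem 1.1 of Bringmann--Jennings-Schaffer--Mahlburg, going back to Ingham, so there is no internal proof to compare against. Judged on its own terms, your circle-method sketch has a genuine gap at exactly the point you flag as ``the only nontrivial one.'' The monotonicity of $a(n)$ gives nonnegativity of the coefficients of $(1-q)f(q)$ and hence the bound $|f(e^{-t-i\theta})|\ll \frac{t}{|\theta|}f(e^{-t})$, but integrating this over the minor arc $\Delta t_0<|\theta|\le\pi$ yields a contribution of order $t_0^{\alpha+1}\log(1/t_0)\,e^{2\sqrt{An}}$, which \emph{dominates} the major-arc main term of order $t_0^{\alpha+3/2}e^{2\sqrt{An}}$. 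The ``exponential suppression'' you invoke is not obtainable from the hypotheses: iterating the trick fails because $(1-q)^2f(q)$ need not have nonnegative coefficients, and the theorem supplies no information about $f$ outside the cone $|y|\le\Delta x$. There is a second, related gap on the major arc: the pointwise asymptotic $f(e^{-\varepsilon})\sim\lambda\varepsilon^{\alpha}e^{A/\varepsilon}$ is hypothesized only for real $\varepsilon\to 0^{+}$, while the cone hypothesis \eqref{b-B} is merely an upper bound of the shape $|\varepsilon|^{\alpha}e^{A/|\varepsilon|}$; this suffices to discard the portion $\delta t_0\le|\theta|\le\Delta t_0$ (since $e^{A/|\varepsilon|}$ is exponentially smaller than $e^{A/t_0}$ there) but does not justify substituting the Gaussian approximation in the central range $|\theta|\ll t_0^{3/2}$. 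These obstructions are precisely why the known proofs (Ingham's original argument and its write-up in the cited reference) are genuine Tauberian arguments --- one first obtains the asymptotic for the partial sums $\sum_{k\le n}a(k)$ and then uses the weak monotonicity to descend to the individual coefficients --- rather than saddle-point evaluations of Cauchy's integral.

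One point in your favor: you correctly observe that the conclusion as printed is missing the factor $e^{2\sqrt{An}}$; this is a typo in the paper's transcription (the factor appears in the cited source and is implicitly used in the paper's own application, where $A=\pi^{2}/4$ produces the $e^{\pi\sqrt{n}}$ in Theorem \ref{N-asym}). Your formal saddle-point computation of the algebraic prefactor $\frac{\lambda}{2\sqrt{\pi}}A^{\alpha/2+1/4}n^{-\alpha/2-3/4}$ is also consistent with the corrected statement.
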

\begin{remark}\label{B-r}
Bringmann, Jennings-Schaffer and Mahlburg \cite[Remark of Theorem 1.1]{Bringmann-Jennings-Schaffer-Mahlburg-2021} noted that if in each region $|y|\leq \Delta x$
\begin{equation*}
  f(e^{-\varepsilon})\sim\lambda \varepsilon^{\alpha}e^{\frac{A}{\varepsilon}}, 
\end{equation*}
then  the second bound in \eqref{b-B} is automatically satisfied.
\end{remark}

Let $\N_2(m, n)$ denote the number of overpartitions of $n$ with the $M_2$-rank $m$.
Xiong and Zang \cite{Xiong-Zang-2020} proved the following inequality on $\N_2(m, n)$ conjectured by Chan and Mao \cite{Chan-Mao-2014}.
We highlight this result to confirm the monotonicity of $\N_2(a,c,n)$ in Theorem \ref{Ingham}.

\begin{lem}\label{N-m}
For all $m\geq0$ and $n>0$, we have
\begin{equation*}
  \N_2(m,n)\geq\N_2(m,n-1).
\end{equation*}
\end{lem}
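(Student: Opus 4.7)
The plan is to establish the inequality by constructing an injection $\phi$ from the set of overpartitions of $n-1$ with $M_2$-rank equal to $m$ into the set of overpartitions of $n$ with $M_2$-rank equal to $m$. Given such a $\lambda$, I would produce $\phi(\lambda)$ by a local modification that adds exactly one to the weight while preserving the $M_2$-rank. The guiding design principle is that the modification should change the four summands $\lceil l(\lambda)/2\rceil$, $n(\lambda)$, $n(\lambda_o)$, and $\chi(\lambda)$ in ways that cancel, so that the signed sum defining the $M_2$-rank is unchanged.

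The natural case analysis is driven by those same four summands: split on (i) the parity of $l(\lambda)$, (ii) whether $l(\lambda)$ is overlined, and (iii) whether $\lambda$ already contains a part equal to $1$ and, if so, whether that $1$ is overlined. Within each case we choose among a handful of elementary moves---appending a new non-overlined or overlined part of $1$, adding $1$ to the largest part, or toggling the overlined status of the largest part in conjunction with one of the previous moves---so that the net change in $\lceil l(\lambda)/2\rceil - n(\lambda) - n(\lambda_o) - \chi(\lambda)$ is zero. Injectivity would then be verified by producing a case-by-case inverse on the image: from $\mu = \phi(\lambda)$ one reads off local data (the parity and overlined status of $l(\mu)$, the smallest part, and the pattern of $1$'s) to identify which case $\lambda$ belonged to, and undoes the corresponding move.

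The main obstacle is the delicate bookkeeping at the largest part: incrementing $l(\lambda)$ flips its parity, which simultaneously affects both $\lceil l(\lambda)/2\rceil$ and $\chi(\lambda)$, while overlining the largest part can also change both $n(\lambda_o)$ and $\chi(\lambda)$. Achieving a net zero change in the rank therefore typically forces a compound move, and the real difficulty is arranging the cases so that their images are pairwise disjoint and the moves always produce legal overpartitions. If this direct combinatorial route proves too intricate, an alternative is to follow Xiong and Zang more closely by manipulating Lovejoy's generating-function identity for the $M_2$-rank into a form that exhibits $\N_2(m,n)-\N_2(m,n-1)$ as a power series with nonnegative coefficients, reducing the whole statement to a bijective argument only on a residual term.
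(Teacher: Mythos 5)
The first thing to note is that the paper does not prove this lemma at all: it is quoted from Xiong and Zang \cite{Xiong-Zang-2020}, who established it as the resolution of a conjecture of Chan and Mao \cite{Chan-Mao-2014}, so the paper's ``proof'' is a citation. Your proposal is therefore being measured against that external proof, and as written it does not close the gap: you describe the \emph{shape} of an injection from overpartitions of $n-1$ with $M_2$-rank $m$ into those of $n$ with the same rank, but you never specify the moves, never carry out the case analysis, and never verify rank preservation, legality of the resulting overpartitions, or disjointness of the images. Since the entire content of the lemma lies in exactly that verification, what you have written is a research plan, not a proof; your own closing sentence (``if this direct combinatorial route proves too intricate\dots'') concedes as much.

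The obstacle you flag is real and is worse than the phrase ``a handful of elementary moves'' suggests. A quick audit of the candidate moves shows none of them is rank-preserving in general: appending a non-overlined part $1$ changes the rank by $-2$ (it increments both $n(\lambda)$ and $n(\lambda_o)$); appending an overlined $1$ changes it by $-1$; incrementing the largest part changes the rank by $-1$, $+1$, $+2$, or $0$ according to whether $l(\lambda)$ is even non-overlined, even overlined, odd non-overlined, or odd overlined, so only the last of these four subcases gives a weight-one, rank-preserving move. Every other situation forces a compound move, and the compound moves must be arranged so that their images are pairwise disjoint and so that one can tell, from $\phi(\lambda)$ alone, which move was applied --- none of which you attempt. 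The fact that the statement remained a conjecture until Xiong and Zang's paper, whose argument proceeds through generating-function identities rather than a clean injection, is strong evidence that the direct combinatorial route is genuinely delicate. Your fallback of ``following Xiong and Zang more closely'' is the correct course of action, but invoking it is an acknowledgement that the proof lives elsewhere, which is precisely what the paper itself does by citing Lemma \ref{N-m} rather than proving it.
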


\section{Asymptotic behavior of $A_1(\pm z,\tau;2\tau)$}\label{s-A}

In this section, we investigate the asymptotic behavior of the Appell function $A_1(\pm z,\tau;2\tau)$ associated with the generating function of $\N_2(a,c,n)$.
This confirms the asymptotic condition in the Ingham Tauberian Theorem in our context.

\begin{thm}\label{A-thm}
Let $0<z<\frac 12$, and $\tau=\frac {i\varepsilon}{2\pi}$ with $\varepsilon=x+iy$ in each region of the form $|y|\leq \Delta x$ for $\Delta>0$ then
\begin{equation*}
  A_1(\pm z,\tau;2\tau)\rightarrow0
\end{equation*}
as $\varepsilon\rightarrow0$.
\end{thm}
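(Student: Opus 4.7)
The plan is to apply the modular transformations \eqref{theta} and \eqref{mu} so as to recast $A_1(\pm z,\tau;2\tau)$ in terms of quantities at the dual modular parameter $-1/(2\tau)$, whose imaginary part tends to $+\infty$ as $\varepsilon\to 0$ in any cone $|y|\leq\Delta x$. At that dual parameter the relevant $q$-series converges rapidly, and the leading behaviour can be extracted by inspection.

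First I set $\ell=1$ in \eqref{A}, which yields the factorisation $A_1(u,v;\sigma)=\vartheta(v;\sigma)\mu(u,v;\sigma)$, so that
\[
A_1(\pm z,\tau;2\tau)=\vartheta(\tau;2\tau)\,\mu(\pm z,\tau;2\tau).
\]
I then apply \eqref{theta} to the theta factor and \eqref{mu} to the $\mu$ factor, both with modular parameter $2\tau$. This expresses $A_1(\pm z,\tau;2\tau)$ as a sum of two terms: a term $T_A$ involving $\vartheta(\tfrac12;-\tfrac1{2\tau})\,\mu(\tfrac{\pm z}{2\tau},\tfrac12;-\tfrac1{2\tau})$, which by the same factorisation collapses to an explicit exponential prefactor times the transformed $A_1(\tfrac{\pm z}{2\tau},\tfrac12;-\tfrac1{2\tau})$; and a term $T_B$ involving $\vartheta(\tfrac12;-\tfrac1{2\tau})$ times the Mordell integral $h(\pm z-\tau;2\tau)$. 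After simplifying the exponential prefactor, the $\tau$-dependent pieces combine to $\frac{1}{2\tau}e^{\pi i z^2/(2\tau)\mp\pi i z}$ in $T_A$.

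Both terms are then estimated using that $\mathrm{Re}(1/\varepsilon)\to\infty$ uniformly in the cone $|y|\leq\Delta x$, so that $|q'|:=|e^{-\pi i/\tau}|$ is exponentially small. For $T_A$, the $n=0$ summand of the defining series \eqref{A-def} dominates the transformed $A_1$, contributing a factor of order $e^{-\pi^2 z\,\mathrm{Re}(1/\varepsilon)}$; the exponential prefactor contributes growth of order $e^{\pi^2 z^2\,\mathrm{Re}(1/\varepsilon)}$, so the product has a net rate of $e^{-\pi^2 z(1-z)\,\mathrm{Re}(1/\varepsilon)}$. Since $z(1-z)>0$ for $0<z<\tfrac12$, this tends to $0$. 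For $T_B$, the dominant $n=\pm\tfrac12$ terms of $\vartheta(\tfrac12;-\tfrac1{2\tau})$ already give exponential decay of order $e^{-\pi^2\mathrm{Re}(1/\varepsilon)/4}$, while $h(\pm z-\tau;2\tau)$ remains bounded: the Gaussian factor $e^{\pi i(2\tau)x^2}=e^{-\varepsilon x^2}$ has modulus at most $1$ on $\mathbb{R}$, and the hypothesis $0<z<\tfrac12$ is precisely what makes the remaining integrand absolutely convergent uniformly as $\tau\to 0$.

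The main obstacle is the bookkeeping for $T_A$: a priori the prefactor $e^{\pi i z^2/(2\tau)}$ grows like $e^{\pi^2 z^2\,\mathrm{Re}(1/\varepsilon)}$, and establishing that the net exponent is negative requires identifying the $n=0$ term of the transformed $A_1$ as dominant and verifying that every other term in \eqref{A-def} is suppressed by at least one power of $|q'|$ (in particular the $n=-1$ term, whose denominator $1-e^{2\pi i u'}q'^{-1}$ is what controls the competition). Once this comparison is carried out uniformly in the cone $|y|\leq\Delta x$, where $\mathrm{Re}(1/\varepsilon)$ is comparable to $1/|\varepsilon|$, the conclusion $A_1(\pm z,\tau;2\tau)\to 0$ follows immediately.
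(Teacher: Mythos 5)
Your proposal follows essentially the same route as the paper: factor $A_1(\pm z,\tau;2\tau)=\vartheta(\tau;2\tau)\mu(\pm z,\tau;2\tau)$ via the $\ell=1$ case of \eqref{A}, apply \eqref{theta} and \eqref{mu}, and show the two resulting pieces vanish — with the dominant $n=0$ term of the transformed series yielding exactly the paper's net exponent $q_0^{\frac12 z(1-z)}$ (i.e.\ $e^{-\pi^2 z(1-z)\,\mathrm{Re}(1/\varepsilon)}$), and the suppression of the $n=-1$ and other terms being precisely the bookkeeping the paper carries out. The only (harmless) deviation is in the Mordell-integral term, which you bound directly as $h(\pm z-\tau;2\tau)$ using $0<z<\frac12$, whereas the paper first applies the transformation \eqref{h} and then shifts the contour; both give the required decay.
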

\begin{proof}
Using \eqref{A}, we have
\begin{align*}
A_1(\pm z,\tau;2\tau)=\vartheta(\tau;2\tau)\mu(\pm z,\tau;2\tau).
\end{align*}
Then combining \eqref{theta} and \eqref{mu}, we find that
\begin{align}\label{A_1-theta}
A_1(\pm z,\tau;2\tau)
&=\frac 1{2\tau}e^{\frac {\pi i(z^2\mp2z\tau)}{2\tau}}\vartheta\left(\frac 12;-\frac 1{2\tau}\right)\mu\left(\frac {\pm z}{2\tau},\frac 12;-\frac 1{2\tau}\right)
\nonumber\\[5pt]
\quad& \quad +\frac 1{\sqrt{-8i\tau}}e^{\frac {-\pi i{\tau}}{2}}\vartheta\left(\frac 12;-\frac 1{2\tau}\right)h(\pm z-\tau;2\tau).
\end{align}
Let $S_1$ and $S_2$ denote the summations on the right-hand side of \eqref{A_1-theta}, respectively.
We first investigate the terms from $S_1$. By \eqref{mu-def}, we have
\begin{align}\label{A_1-theta-mu}
\vartheta\left(\frac 12;-\frac 1{2\tau}\right)\mu\left(\frac {\pm z}{2\tau},\frac 12;-\frac 1{2\tau}\right)
&=e^{\frac {\pm \pi i z}{2\tau}}\sum_{n\in\mathbb{Z}}\frac{(-1)^n e^{\frac{-\pi i(n^2+n)}{2\tau}}e^{\pi in}}{1-e^{\frac {-\pi in}{\tau}}e^{\frac{\pm \pi i z}{\tau}}}
\nonumber\\
\quad&=q_0^{\mp\frac{z}{2}}\sum_{n\in\mathbb{Z}}\frac{q_0^{\frac{n^2+n}{2}}}{1-q_0^{n\mp z}},
\end{align}
where $q_0:=e^{-\frac{\pi i}{\tau}}=e^{-\frac{2\pi^2}{\varepsilon}}$.

In view of \eqref{A_1-theta-mu}, $S_1$ can be rewritten as
\begin{equation*}
S_1=\frac 1{2\tau} q_0^{-\frac{1}{2}(z^2\pm z)}e^{\mp\pi iz}\sum_{n\in\mathbb{Z}}\frac{q_0^{\frac{n^2+n}{2}}}{1-q_0^{n\mp z}}.
\end{equation*}
Noting that
\begin{equation*}
  \left|e^{-\frac{1}{\varepsilon}}\right|=e^{-\frac{x}{x^2+y^2}}\leq e^{-\frac{1}{(1+\Delta^2)x}}\leq e^{-\frac{1}{(1+\Delta^2)|\varepsilon|}},
\end{equation*}
so if the lowest power of $q_0$ is positive, $S_1$ tends to $0$ in our asymptotic limit.

 We first consider the plus sign, that is
 \begin{equation*}
S_1^+:=\frac 1{2\tau} q_0^{-\frac{1}{2}(z^2+ z)}e^{-\pi iz}\sum_{n\in\mathbb{Z}}\frac{q_0^{\frac{n^2+n}{2}}}{1-q_0^{n-z}}.
\end{equation*}
Considering only the inner sum without the prefactor, the $n=0$ term is
\begin{equation*}
  \frac 1{1-q_0^{-z}}=\frac {-q_0^z}{1-q_0^{z}}=-q_0^z-q_0^{2z}-\cdots,
\end{equation*}
where $z\in(0,\frac 12)$ and $\tau\in\mathbb{H}$.
For $n\geq1$, it is clear that the lowest power of $q_0$ in the inner sum is $1$.
When $n<0$, we have $n-z<0$ and hence the term
\begin{equation*}
\frac{q_0^{\frac{n^2+n}{2}}}{1-q_0^{n-z}}
=-q_0^{\frac{n^2-n+2z}{2}}\sum_{j\geq0}q_0^{(z-n)j},
\end{equation*}
with the lowest order term $-q_0^{\frac{n^2-n+2z}{2}}$. Obviously, $\frac{n^2-n+2z}{2}\geq 1+z$.
Therefore, for $z\in(0,\frac 12)$, the lowest power of $q_0$ is given by the $n=0$ term, which is
 \begin{equation*}
-\frac 1{2\tau} q_0^{-\frac{1}{2}(z^2+ z)}e^{-\pi iz}q_0^z=-\frac 1{2\tau} q_0^{-\frac{1}{2}(z^2- z)}e^{-\pi iz}.
\end{equation*}
Using the facts that $0<z<\frac 12$, $-\frac{1}{2}(z^2- z)>0$, we infer that $S_1^+\rightarrow 0$ as $\varepsilon\rightarrow0$.

We next deal with the minus sign
 \begin{equation*}
S_1^-:=\frac 1{2\tau} q_0^{-\frac{1}{2}(z^2-z)}e^{\pi iz}\sum_{n\in\mathbb{Z}}\frac{q_0^{\frac{n^2+n}{2}}}{1-q_0^{n+z}}.
\end{equation*}
Similar to the plus sign $S_1^+$, we consider the three cases about $n$. Then we find that the lowest power of $q_0$ is also given by the $n=0$ term, that is
 \begin{equation*}
\frac 1{2\tau} q_0^{-\frac{1}{2}(z^2- z)}e^{\pi iz}.
\end{equation*}
Since $-\frac{1}{2}(z^2- z)>0$, we have $S_1^-\rightarrow 0$.

We now verify the remaining term $S_2\rightarrow 0$ as well.
Applying \eqref{h}, we find that
\begin{align*}
S_2=\frac i{4\tau}e^{\frac {\pi i(z^2\mp 2z\tau)}{2\tau}}\vartheta\left(\frac 12;-\frac 1{2\tau}\right) h\left(\frac {\pm z-\tau}{2\tau};-\frac 1{2\tau}\right).
\end{align*}
On the one hand, by \eqref{theta-def}, we have
\begin{equation*}
  \vartheta\left(\frac 12;-\frac 1{2\tau}\right)=\sum_{n\in\frac 12+\mathbb{Z}}e^{-\frac {\pi in^2}{2\tau}+2\pi in}=\sum_{n\in\frac 12+\mathbb{Z}}q_0^{\frac{n^2}2}e^{2\pi in}.
\end{equation*}
It is obvious that the smallest power of $q_0$ appearing in $e^{\frac {\pi i(z^2\mp 2z\tau)}{2\tau}}\vartheta\left(\frac 12;-\frac 1{2\tau}\right)$ is given by ($n=\pm\frac 12$)
\begin{equation*}
  q_0^{-\frac{z^2}2+\frac 18}e^{\mp\pi iz}\left(e^{\pi i}+e^{-\pi i}\right).
\end{equation*}
Then the power of $q_0$ in $e^{\frac {\pi i(z^2\mp 2z\tau)}{2\tau}}\vartheta\left(\frac 12;-\frac 1{2\tau}\right)$ is positive due to $0<z<\frac 12$. Hence these terms give no contribution in the limiting situation.

On the other hand, with the definition of $h(z;\tau)$ in \eqref{h-def}, we find that
\begin{align*}
  h\left(\frac {\pm z-\tau}{2\tau};-\frac 1{2\tau}\right)
  &=\int_{\mathbb{R}}\frac {e^{-\frac {\pi ix^2}{2\tau}-2\pi\left(\frac{\pm z}{2\tau}-\frac 12\right)x}}{\cosh(\pi x)}dx\\
  \quad&=e^{\frac {-\pi iz^2}{2\tau}\pm\pi i z}\int_{\mathbb{R}\pm iz}\frac{e^{-\frac{\pi iw^2}{2\tau}+\pi w}}{\cosh\left(\pi(w\mp iz)\right)}dw,
\end{align*}
here we let $w=x\pm iz$.
Since $0<z<\frac 12$, the integrand has no poles. So we can shift the path of integration back to $\mathbb{R}$,
\begin{equation*}
  h\left(\frac {\pm z-\tau}{2\tau};-\frac 1{2\tau}\right)=e^{\frac {-\pi iz^2}{2\tau}\pm\pi i z}\int_{\mathbb{R}}\frac{e^{-\frac{\pi iw^2}{2\tau}+\pi w}}{\cosh\left(\pi(w\mp iz)\right)}dw.
\end{equation*}
Using the trivial bound
\begin{equation*}
  \frac 1{\left|\cosh\left(\pi(w\mp iz)\right)\right|}=\frac 2{\left|e^{\pi(w\mp iz)}+e^{-\pi(w\mp iz)}\right|}\leq\frac 2{\left|e^{-\pi iz}+e^{\pi iz}\right|}=\left|\sec(\pi z)\right|,
\end{equation*}
and the fact $\int_{\mathbb{R}}e^{-t^2}dt=\sqrt{\pi}$,
we find that
\begin{equation*}
  \left|h\left(\frac {\pm z-\tau}{2\tau};-\frac 1{2\tau}\right)\right|\leq \left|\sec(\pi z)\right|\exp\left(-\frac{\pi^2z^2}{\varepsilon}+\frac {\varepsilon}4\right)\pi^{-\frac 12}\varepsilon^{\frac 12},
\end{equation*}
where $\tau=\frac {i\varepsilon}{2\pi}$.
Hence, we have
\begin{equation*}
  \left|\frac{i}{4\tau} h\left(\frac {\pm z-\tau}{2\tau};-\frac 1{2\tau}\right)\right|\leq \left|\sec(\pi z)\right|\exp\left(-\frac{\pi^2z^2}{\varepsilon}+\frac {\varepsilon}4\right)\frac{\pi^{\frac 12}}{2\varepsilon^{\frac 12}}.
\end{equation*}
It is easy to see that as $\varepsilon\rightarrow0$ this contribution vanishes. This completes the proof.
\end{proof}

\section{Asymptotic formula and inequalities}\label{s-asym}

In this section, we employ the Ingham Tauberian Theorem on the generating function of $\N_2(a,c,n)$ to deduce its asymptotic formula.
Several inequalities concerning on $\N_2(a,c,n)$ including its strict concavity and log-concavity are obtained based on this formula.

We basically handle with the generating function of $\N_2(m, n)$.
Lovejoy \cite{Lovejoy-2008} found that
\begin{align}\label{N-gener}
 \overline{R}_2(\zeta,q)&=\sum_{n=0}^{\infty} \sum_{m=-\infty}^{\infty}\N_2(m, n)\zeta^mq^n=\sum_{n=0}^{\infty}\frac{(-1;q)_{2n}q^n }{(\zeta q^2;q^2)_n(\zeta^{-1}q^2;q^2)_n}\notag \\
 \quad &=\frac{(-q;q)_\infty}{(q;q)_\infty}\sum_{n=-\infty}^{\infty}\frac{(1-\zeta)(1-\zeta^{-1})(-1)^n q^{n^2+2n}}{(1-\zeta q^{2n})(1-\zeta^{-1}q^{2n})},
\end{align}
where
\begin{equation*}
  (a;q)_\infty=\prod_{n=0}^{\infty}(1-aq^n)~~\text{and}~~(a;q)_n=\frac {(a;q)_\infty}{(aq^n;q)_\infty}.
\end{equation*}
In particular,
\begin{equation*}
  \overline{R}_2(1,q)=\sum_{n=0}^{\infty}\p(n)q^n.
\end{equation*}

The following lemmas play central roles in the proof of Theorem \ref{N-asym}.
\begin{lem}
For integer $0\leq a<c$  and odd integer $c$, we have
\begin{equation}\label{N}
  \sum_{n=0}^{\infty} \N_2(a,c,n)q^n=\frac 1c\left[ \sum_{n=0}^{\infty}\p(n)q^n+\sum_{j=1}^{\frac{c-1}2}\left(\zeta_c^{-aj}+\zeta_c^{aj}\right)\overline{R}_2(\zeta_c^{j},q)\right],
\end{equation}
where $\zeta_c=e^{{2\pi i}/c}$ and $q=e^{2\pi i\tau}, \tau\in\mathbb{H}$.
\end{lem}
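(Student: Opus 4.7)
The plan is to apply the standard roots-of-unity filter to the bivariate generating function $\overline{R}_2(\zeta,q)$ defined in \eqref{N-gener}, and then reduce the resulting sum using a symmetry under $\zeta \leftrightarrow \zeta^{-1}$. Starting from $\overline{R}_2(\zeta,q)=\sum_{n\geq 0}\sum_{m\in\mathbb{Z}}\N_2(m,n)\zeta^m q^n$, the orthogonality relation
\[
\frac 1c\sum_{j=0}^{c-1}\zeta_c^{(m-a)j}=\begin{cases}1,&m\equiv a\pmod c,\\ 0,&\text{otherwise},\end{cases}
\]
immediately gives
\[
\sum_{n=0}^{\infty}\N_2(a,c,n)q^n=\frac 1c\sum_{j=0}^{c-1}\zeta_c^{-aj}\,\overline{R}_2(\zeta_c^{j},q).
\]
The $j=0$ contribution is $\overline{R}_2(1,q)=\sum_{n\geq 0}\p(n)q^n$, which is the first piece on the right-hand side of \eqref{N}.

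Next I would exploit the invariance of $\overline{R}_2(\zeta,q)$ under $\zeta\mapsto\zeta^{-1}$. The second expression for $\overline{R}_2(\zeta,q)$ in \eqref{N-gener} is visibly symmetric in $\zeta$ and $\zeta^{-1}$ (the factor $(1-\zeta)(1-\zeta^{-1})$ and each denominator factor $(1-\zeta q^{2n})(1-\zeta^{-1}q^{2n})$ are individually symmetric), so $\overline{R}_2(\zeta_c^{c-j},q)=\overline{R}_2(\zeta_c^{-j},q)=\overline{R}_2(\zeta_c^{j},q)$. Since $c$ is odd, the index set $\{1,2,\dots,c-1\}$ decomposes into $(c-1)/2$ pairs $\{j,c-j\}$, and for each pair the coefficient combines as
\[
\zeta_c^{-aj}+\zeta_c^{-a(c-j)}=\zeta_c^{-aj}+\zeta_c^{aj},
\]
using $\zeta_c^{-ac}=1$. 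Summing over these pairs produces exactly the second term in \eqref{N}.

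The argument is essentially bookkeeping, so there is no serious obstacle; the only subtle point worth spelling out is the symmetry $\overline{R}_2(\zeta^{-1},q)=\overline{R}_2(\zeta,q)$, which I would justify by pointing at the Hecke-type representation in \eqref{N-gener} rather than by reindexing the Hecke sum, since both the $(1-\zeta)(1-\zeta^{-1})$ prefactor and the product $(1-\zeta q^{2n})(1-\zeta^{-1}q^{2n})$ in the denominator are manifestly invariant under this swap. With this symmetry in hand, combining the $j=0$ term and the paired contributions yields \eqref{N} directly.
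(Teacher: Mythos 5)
Your proposal is correct and follows essentially the same route as the paper: an orthogonality (roots-of-unity) filter applied to $\overline{R}_2(\zeta,q)$ to isolate the residue class $a \pmod{c}$, followed by the symmetry $\overline{R}_2(\zeta^{-1},q)=\overline{R}_2(\zeta,q)$ to pair the indices $j$ and $c-j$ and fold the sum down to $1\le j\le \frac{c-1}{2}$. Your justification of the symmetry via the manifestly $\zeta\leftrightarrow\zeta^{-1}$-invariant Hecke-type representation in \eqref{N-gener} is actually spelled out more carefully than in the paper, which simply asserts the identity.
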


To verify \eqref{N}, we need to show that
\begin{equation}\label{N-1}
  \sum_{n=0}^{\infty} \N_2(a,c,n)q^n=\frac 1c \sum_{n=0}^{\infty}\p(n)q^n+\frac 1c\sum_{j=1}^{c-1}\zeta_c^{-aj}\overline{R}_2(\zeta_c^{j},q),
\end{equation}
where $\zeta$ is the root of unity.

By \eqref{N-gener}, we notice that the right hand side of \eqref{N-1} can be expanded as
\begin{align*}
 \frac 1c \sum_{n=0}^{\infty}\p(n)q^n+\frac 1c\sum_{j=1}^{c-1}\zeta_c^{-aj}\overline{R}_2(\zeta_c^{j},q)
 &=\frac 1c \sum_{j~~(\text{mod}~c)} \sum_{n=0}^{\infty} \sum_{m=-\infty}^{\infty} \N_2(m,n)\zeta_c^{mj}\zeta_c^{-aj}q^n
 \\
 &=\frac 1c \sum_{n=0}^{\infty} \sum_{m=-\infty}^{\infty} \N_2(m,n)\left(\sum_{j~~(\text{mod}~c)}\zeta_c^{mj-aj}\right)q^n.
\end{align*}
By the orthogonality of the roots of unity
\[\sum_{j\pmod{c}}\zeta_{c}^{rj}=\left\{\begin{array}{ll}
0, &r\not\equiv0\pmod{c},\\
c, &r\equiv0\pmod{c},
\end{array}\right.\]
we obtain
\begin{align*}
\frac{1}{c}\sum_{n=0}^\infty \overline{p}(n)q^n
+\frac{1}{c}\sum_{j=1}^{c-1}\zeta^{-aj}\overline{R}_2\left(\zeta_c^j;q\right)
&=\sum_{n=0}^\infty\sum_{m=-\infty\atop m\equiv a\pmod{c}}^{\infty}\N_2(m,n)q^n
\nonumber\\[3pt]
&=\sum_{n=0}^\infty\N_2(a,c,n)q^n,
\end{align*}
which gives \eqref{N-1}. Since $\overline{R}_2\left(\zeta;q\right)=\overline{R}_2\left(\zeta^{-1};q\right)$, then applying \eqref{N-1}, we can get \eqref{N}.
\qed

Then we prove an identity on the $M_2$-rank generating function $\overline{R}_2(\zeta,q)$.
\begin{lem}
Let $\tau\in\mathbb{H}, z\in \mathbb{R}, q:=e^{2\pi i\tau}$ and $\zeta:=e^{2\pi iz}$. We have
\begin{align}\label{R}
 \overline{R}_2(\zeta,q)=\frac{(-q;q)_\infty}{(q;q)_\infty}\frac {(1-\zeta)}{1+\zeta}\left\{\zeta^{\frac 12}A_1(z,\tau;2\tau)-\zeta^{-\frac 12}A_1(-z,\tau;2\tau)\right\}.
\end{align}
\end{lem}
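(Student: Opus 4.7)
\textbf{Proof plan for the identity on $\overline{R}_2(\zeta,q)$.}

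The plan is to start from Lovejoy's second expression
\[\overline{R}_2(\zeta,q)=\frac{(-q;q)_\infty}{(q;q)_\infty}\sum_{n\in\mathbb{Z}}\frac{(1-\zeta)(1-\zeta^{-1})(-1)^n q^{n^2+2n}}{(1-\zeta q^{2n})(1-\zeta^{-1}q^{2n})}\]
given in \eqref{N-gener}, peel apart the rational factor in the summand by partial fractions, and then recognise the two resulting Lambert-type sums as evaluations of the level-one Appell function $A_1$ appearing in \eqref{A-def}.

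First I would check, by clearing denominators and using $\zeta-\zeta^{-1}=(\zeta-1)(\zeta+1)/\zeta$, the algebraic identity
\[\frac{(1-\zeta)(1-\zeta^{-1})}{(1-\zeta q^{2n})(1-\zeta^{-1}q^{2n})}=\frac{1-\zeta}{1+\zeta}\left\{\frac{\zeta}{1-\zeta q^{2n}}-\frac{\zeta^{-1}}{1-\zeta^{-1}q^{2n}}\right\},\]
valid term by term in $n$. Both sides equal $(2-\zeta-\zeta^{-1})/((1-\zeta q^{2n})(1-\zeta^{-1}q^{2n}))$, so this is a routine but essential step.

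Next I would substitute this decomposition into $\overline{R}_2(\zeta,q)$ and split the sum into the two pieces
\[\zeta\sum_{n\in\mathbb{Z}}\frac{(-1)^n q^{n^2+2n}}{1-\zeta q^{2n}}\quad\text{and}\quad\zeta^{-1}\sum_{n\in\mathbb{Z}}\frac{(-1)^n q^{n^2+2n}}{1-\zeta^{-1} q^{2n}}.\]
Specialising the definition \eqref{A-def} with $\ell=1$, $\tau\mapsto 2\tau$, $u=\pm z$, $v=\tau$ (so that $e^{2\pi iu}=\zeta^{\pm 1}$ and $q^{\ell n(n+1)/2}\cdot e^{2\pi inv}=q^{n^2+n}\cdot q^n=q^{n^2+2n}$) yields
\[A_1(\pm z,\tau;2\tau)=\zeta^{\pm 1/2}\sum_{n\in\mathbb{Z}}\frac{(-1)^n q^{n^2+2n}}{1-\zeta^{\pm 1}q^{2n}}.\]
Consequently the two pieces equal $\zeta^{1/2}A_1(z,\tau;2\tau)$ and $\zeta^{-1/2}A_1(-z,\tau;2\tau)$, respectively, and combining them with the prefactor $(1-\zeta)/(1+\zeta)$ and the infinite product $(-q;q)_\infty/(q;q)_\infty$ gives exactly \eqref{R}.

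There is no really hard step: the only thing to watch carefully is the bookkeeping in the partial-fraction identity (including the sign that appears through $\zeta-\zeta^{-1}=-(1-\zeta)(1+\zeta)/\zeta$) and the exponent computation that turns $e^{\pi i\ell u}$ and $(-1)^{\ell n}q^{\ell n(n+1)/2}e^{2\pi inv}$, with the substitutions above, into the $\zeta^{\pm 1/2}$ and $q^{n^2+2n}$ actually appearing in $\overline{R}_2$. I expect that matching these normalisations correctly will be the trickiest part, but once the partial fraction is verified the identification with $A_1(\pm z,\tau;2\tau)$ is essentially immediate from \eqref{A-def}.
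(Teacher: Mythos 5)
Your proposal is correct and follows essentially the same route as the paper: the paper also rewrites the summand via the partial-fraction identity $\frac{\zeta}{1-\zeta q^{2n}}-\frac{\zeta^{-1}}{1-\zeta^{-1}q^{2n}}=\frac{\zeta-\zeta^{-1}}{(1-\zeta q^{2n})(1-\zeta^{-1}q^{2n})}$ together with $\frac{(1-\zeta)(1-\zeta^{-1})}{\zeta-\zeta^{-1}}=\frac{1-\zeta}{1+\zeta}$, and then identifies the two resulting Lambert series with $\zeta^{\pm 1/2}A_1(\pm z,\tau;2\tau)$ exactly as you do. Your exponent bookkeeping for $A_1$ matches the definition \eqref{A-def}, so no changes are needed.
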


\begin{proof}
One can easily derive that
\begin{equation}\label{zeta}
  \frac {\zeta}{1-\zeta q^{2n}}-\frac {\zeta^{-1}}{1-\zeta^{-1} q^{2n}}=\frac {\zeta-\zeta^{-1}}{\left(1-\zeta q^{2n}\right)\left(1-\zeta^{-1} q^{2n}\right)}.
\end{equation}
Substituting \eqref{zeta} into \eqref{N-gener}, we obtain that
\begin{align*}
 \overline{R}_2(\zeta,q)&=\frac{(-q;q)_\infty}{(q;q)_\infty}\sum_{n=-\infty}^{\infty} (-1)^n q^{n^2+2n}\cdot\left( \frac {\zeta}{1-\zeta q^{2n}}-\frac {\zeta^{-1}}{1-\zeta^{-1} q^{2n}}\right)\frac{(1-\zeta)(1-\zeta^{-1})}{\zeta-\zeta^{-1}}\\
 \quad&=\frac{(-q;q)_\infty}{(q;q)_\infty}\frac {1-\zeta}{1+\zeta}\left(\sum_{n=-\infty}^{\infty} \frac{(-1)^n q^{n^2+2n}\zeta}{1-\zeta q^{2n}}-\sum_{n=-\infty}^{\infty} \frac{(-1)^n q^{n^2+2n}\zeta^{-1}}{1-\zeta^{-1} q^{2n}}\right)\\
 \quad&=\frac{(-q;q)_\infty}{(q;q)_\infty}\frac {(1-\zeta)}{1+\zeta}\left\{\zeta^{\frac 12}A_1(z,\tau;2\tau)-\zeta^{-\frac 12}A_1(-z,\tau;2\tau)\right\}.
\end{align*}
This completes the proof.
\end{proof}

Now we are in a position to prove Theorem \ref{N-asym}.

\begin{proof}[Proof of Theorem \ref{N-asym}]
We can rewrite $\N_2(a,c,n)$ as
\begin{equation*}
\N_2(a,c,n)=\sum_{k\in\mathbb{Z}}\N_2(a+kc,n),
\end{equation*}
which is a finite sum, since for $|a+kc|>n$ we have $\N_2(a+kc,n)=0$.

By Lemma \ref{N-m}, we derive that
for all non-negative integers $m$ and positive integers $n$,
\begin{equation*}
  \N_2(a,c,n)\geq\N_2(a,c,n-1).
\end{equation*}

We are therefore in the situation where we may apply Theorem \ref{Ingham}.
By Remark \ref{B-r}, we only need to investigate the asymptotic behavior
\begin{equation*}
 \lim_{\varepsilon\rightarrow0} \sum_{n=0}^{\infty} \N_2(a,c,n)e^{-\varepsilon n}.
\end{equation*}
Combining \eqref{N} and \eqref{R}, we obtain that
\begin{align*}
\sum_{n=0}^{\infty} \N_2(a,c,n)q^n=&\frac 1c \sum_{n=0}^{\infty}\p(n)q^n \Bigg(1+\sum_{j=1}^{\frac {c-1}2}\left(\zeta_c^{-aj}+\zeta_c^{aj}\right)\frac {(1-\zeta_c^j)}{1+\zeta_c^j}
\\
&\times \left(\zeta_{2c}^{j}A_1(\frac jc,\tau;2\tau)-\zeta_{2c}^{-j}A_1(-\frac jc,\tau;2\tau)\right)\Bigg).
\end{align*}
Let $\tau=\frac {i\varepsilon}{2\pi}$ with $\varepsilon=x+iy$ in each region $|y|\leq \Delta x$ for $\Delta>0$ and consider $\varepsilon\rightarrow0$. Since $0<\frac jc<\frac 12$, we can know that the term in the biggest brackets is asymptotically equal to $1$ in this limit by Theorem \ref{A-thm}. Hence we have
\begin{equation*}
 \lim_{\varepsilon\rightarrow0} \sum_{n=0}^{\infty} \N_2(a,c,n)e^{-\varepsilon n}
 \sim \frac {\varepsilon^{\frac 12}}{2c\sqrt{\pi}}e^{\frac {\pi^2}{4\varepsilon}}
\end{equation*}
in these regions of restricted angle.
Then applying Theorem \ref{Ingham} we see that for $n\rightarrow\infty$
\begin{equation*}
  \N_2(a,c,n)\sim \frac 1{8cn}e^{\pi \sqrt{n}}.
\end{equation*}
By the following well-known fact (see, e.g., \cite{Hardy-Ramanujan-1918})
\begin{equation}\label{asym-p}
  \p(n)\sim \frac 1{8n}e^{\pi \sqrt{n}},~\text{for}~n\rightarrow\infty,
\end{equation}
we arrive at \eqref{N-asym-eq}, which completes the proof.
\end{proof}

We prove Theorems \ref{con} and \ref{ineq} as applications of Theorem \ref{N-asym}.

\begin{proof}[Proof of Theorem \ref{con}]
 Consider the ratio
\begin{equation*}
  \frac {\N_2(a,c,n_1)\N_2(a,c,n_2)}{\N_2(a,c,n_1+n_2)}
\end{equation*}
as $n_1, n_2 \rightarrow\infty$. By Theorem \ref{N-asym} we have
\begin{align*}
\frac {\N_2(a,c,n_1)\N_2(a,c,n_2)}{\N_2(a,c,n_1+n_2)}
\sim \frac{n_1+n_2}{8cn_1 n_2}\cdot\frac{e^{\left(\pi \sqrt{n_1}+\pi \sqrt{n_2}\right)}}{e^{\pi \sqrt{n_1+n_2}}}
>1
\end{align*}
as $n_1, n_2 \rightarrow\infty$, which completes the proof.
\end{proof}

\begin{proof}[Proof of Theorem \ref{ineq}]
 Similar to the proof of Theorem \ref{con}, we have to consider the ratio
 \begin{equation*}
  \frac {\N_2(a,c,n_1)\N_2(a,c,n_2)}{\N_2(a,c,n_1-1)\N_2(a,c,n_2+1)}.
\end{equation*}
For $n_1<n_2+1$, by Theorem \ref{N-asym}, as $n_1, n_2\rightarrow\infty$ we have
\begin{align*}
\frac {\N_2(a,c,n_1)\N_2(a,c,n_2)}{\N_2(a,c,n_1-1)\N_2(a,c,n_2+1)}
&\sim\frac{(n_1-1)(n_2+1)}{n_1 n_2}\cdot\frac{e^{\left(\pi \sqrt{n_1}+\pi \sqrt{n_2}\right)}}{e^{\left(\pi \sqrt{n_1-1}+\pi \sqrt{n_2+1}\right)}}
>1.
\end{align*}
This completes the proof.
\end{proof}

\vspace{0.5cm}
 \baselineskip 15pt
{\noindent\bf\large{\ Acknowledgements}} \vspace{7pt} \par
This work was supported by the National Natural Science Foundation of China (Grant Nos. 12001182 and 12171487),  the Fundamental Research Funds for the Central Universities (Grant No. 531118010411) and Hunan Provincial Natural Science Foundation of China (Grant No. 2021JJ40037).

\end{document}